\definecolor{red}{rgb}{1,0,0}
\definecolor{blue}{rgb}{0,0,1}
\definecolor{green}{rgb}{0,1,0}
\newtheorem{theorem}{Theorem}[section]
\newtheorem{lemma}[theorem]{Lemma}
\newtheorem{corollary}[theorem]{Corollary}
\newtheorem{proposition}[theorem]{Proposition}
\theoremstyle{definition}
\newtheorem{definition}[theorem]{Definition}
\newtheorem{remark}[theorem]{Remark}
\newtheorem{example}[theorem]{Example}
\def\F{\mathcal{F}}
\def\pit{{\widetilde{\pi}}}
   \title{A User's Guide to Morse Foliated Open Books}
\author[Vera V\'ertesi]{Vera V\'ertesi}
\address{University of Vienna}
\email{vera.vertesi@univie.ac.at}
\author[Joan E. Licata]{Joan E. Licata}
\address{Mathematical Sciences Institute, The Australian National University}
\email{joan.licata@anu.edu.au}
\begin{document}
\begin{abstract}
Morse foliated open books were introduced in \cite{LV} (along with abstract and embedded versions) as a tool for studying contact manifolds with boundary, and this article illustrates the advantages of the Morse perspective.  We use this to extend the definition of \textit{right-veering} to foliated open books and we show that it plays a similar role in detecting overtwistedness as in other versions of open books.
 
\end{abstract}
\keywords{contact structure, open book, partial open book, open book foliation, Morse function}
\maketitle

\maketitle




\section{Introduction}

Three flavours of foliated open books were introduced in \cite{LV}, each a topological decomposition of a manifold with boundary which determines an equivalence class of contact structures on the ambient manifold.  Embedded foliated open books provide an intuitive construction: cut a traditional open book along a generic separating surface and the result is a pair of embedded foliated open books.  Abstract foliated open books were explored further in \cite{AFHLPV} where they were used to defined a contact invariant in bordered sutured Floer homology.  In this article we turn attention to Morse foliated open books, illustrating the benefits of this perspective by extending the established notion of a right-veering monodromy to the open book setting.  

We also admit to a fondness for Morse foliated open books.  There are technical advantages to having three versions to select from, but when foliated open books existed only as chalked pictures on a board, their intrinsic data always included a circle-valued Morse function.  It is therefore a pleasure to return to this approach now. For a topologist, one of the beautiful applications of Morse theory is the metamorphosis it facilitates from differential geometry to geometric topology, turning a smooth manifold into a handle structure.  Open books of all flavours serve a similar purpose in contact geometry, encoding a non-integrable plane field via a topological decomposition.  Here, we equip the complement of a properly embedded one-manifold with an $S^1$-Morse function, all of whose critical points lie on the boundary.  As in standard Morse theory, such a critical point corresponds to a change in the topology of the level sets of the Morse function, but the level sets are now interpreted as pages of an open book decomposition.  We may thus see an unlimited number of topologically distinct page types, but the transition between any two page types is tightly controlled.  Furthermore, a catalogue of these changes is recorded on the boundary, where each critical point of the original Morse function is a critical point of its restriction to the boundary.  As previously shown, this boundary data alone determines a family of compatible Morse functions on the original contact manifold, so that a relatively small amount of data captures a broad and flexible set of decompositions.  Precise definitions and key theorems are recalled in the next section, and we briefly outline some dividends from this perspective.

The step from smooth manifold to handle decomposition requires a choice of gradient-like vector field, and such a choice is similarly useful in the case of Morse foliated open books. We choose a particular class of gradient-like vector fields characterized by their flow on the restriction to the boundary of the manifold. With this class fixed, we define the monodromy of a Morse foliated open book as the first return map of this flow relative to a fixed page.   This is defined only on a subsurface of the page, just as in the case of the partial open books defined by Honda-Kazez-Matic, and in some cases our notion of monodromy can be directly identified with their notion \cite{hkm09}.  However, the fact that foliated open books admit a quite flexible notion of pages leads to Morse foliated open books which may not immediately be interpreted as partial open books.  In this case, the monodromy of a foliated open book is a strict generalisation of the partial open book monodromy, and Section~\ref{sec:rv} explores the consonance of the two versions through a family of examples. We also show  ---perhaps unsurprisingly--- that veering monodromies play a similar role in foliated open books as in their classical and partial counterparts.

The study of right-veering monodromies initiated by Goodman and developed by Honda-Kazez-Matic relates the tightness of a contact structure to a measure of positivity in its supporting open books \cite{Goodman}, \cite{HKM09_HF}, \cite{hkm09}.  The definition of right-veering extends verbatim to foliated open book monodromies, and with similar consequences: a contact manifold is tight if and only if all of its supporting Morse foliated open books have right-veering monodromy.  In the case that a Morse foliated open book admits a left-veering arc, one may construct an overtwisted disc as in \cite{IKtot}.  




\section{Foliated Open Books}

\begin{definition} A \emph{(Morse) foliated open book} for a three-manifold with boundary $(M, \partial)$ consists of $(B, \pi)$, where $B$ is an oriented, properly embedded $1$-manifold and $\pi:M\setminus B\rightarrow S^1$ satisfies the following properties:
\begin{enumerate}
\item $\pi$ is an $S^1$-valued Morse function whose critical points all lie on $\partial M$;
\item $\pit:=\pi|_{\partial M}$ is Morse with the same set of critical points as $\pi$;
\item $\pi$ has a unique critical point for each critical value;
\end{enumerate}
\end{definition}

One of the first applications of classical Morse theory is to relate the topology of sublevel sets to the critical points of the Morse function.  Because the present function $\pi$ has only boundary critical points, the critical points instead detect changes in the topology of the level sets of $\pi$.  Equivalently, the critical points of $\pit$ detect changes in the topology of the level sets of $\pi$, and we record this  on $\partial M$.  Specifically, let $\mathcal{F}_\pit$ be the singular foliation whose leaves are the level sets of $\pit$, oriented as the boundary of the level sets of $\pi$, and each singular point comes with a sign: elliptic points are distinguished as positive sources and negative sinks, while (four-pronged) hyperbolic points are distinguished by the index of the critical point of $\pi$. An index two critical point of $\pi$ gives rise to a positive hyperbolic point of the singular foliation and corresponds to cutting a level set of $\pi$ along an arc.  An index one critical point of $\pi$ gives rise to a negative hyperbolic point of the singular foliation and corresponds to adding a one-handle to a level set of $\pi$.  

\begin{figure}[h!]
\begin{center}
\includegraphics[scale=0.7]{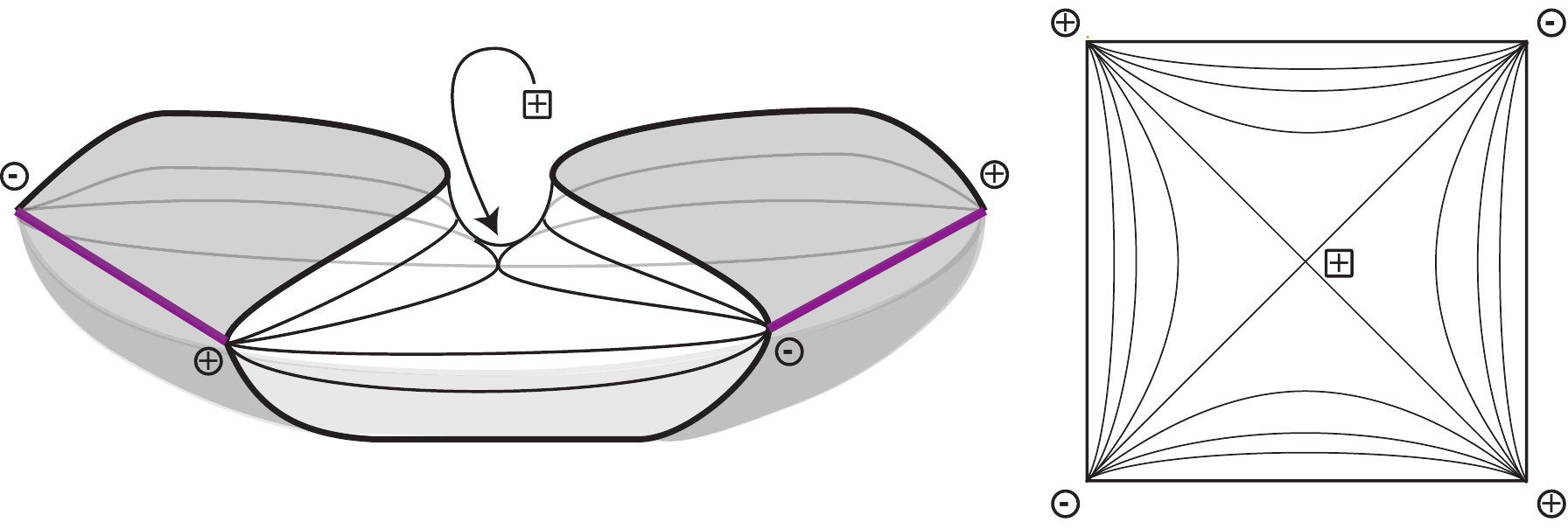}
\caption{ Left: a local cobordism between level sets near an index two boundary critical point of $\pi$.  Right: the boundary foliation $\mathcal{F}_\pit$ near the positive hyperbolic point. }\label{fig:levelsetfol}
\end{center}
\end{figure}

\begin{definition}  If $(B, \pi)$ is a foliated open book on $(M, \partial M)$, then $\mathcal{F}_\pit$ is the associated \emph{open book foliation} on $\partial M$.\end{definition}

By a slight abuse of the notation we call any (singular) foliation that comes from this construction an open book foliation, sometimes without a reference  to the enclosing open book and manifold. 

Open book foliations were first studied by Pavelsecu \cite{Pav} and by Ito-Kawamuro \cite{IK1} as a generalization of the braid foliations introduced by Birman-Menasco \cite{BM} that in turn rest on the ideas of Bennequin \cite{Ben}.  Our definition differs slightly from that of Ito-Kawamuro in the requirement that the foliation is by level sets of a Morse function, but this may be imposed in their setting without penalty.  

When $\mathcal{F}_\pit$ has no circle leaves, it admits a \textit{dividing set}, a curve $\Gamma$ defined up to isotopy as the boundary of a neighborhood of the positive separatrices from positive hyperbolic points.  If there is a (not necessarily smooth) isotopy taking one signed singular foliation to another through foliations sharing a dividing set, then we say that the two foliations are \textit{strongly topologically conjugate}.  This allows us to state the relationship between foliated open books and contact structures:

 \begin{definition}\label{def:support} \cite[Definition 3.7]{LV} The Morse foliated open book $(B, \pi)$ \emph{supports} the contact structure $\xi$ on $(M, \partial M)$ if $\xi$ is the kernel of some one-form $\alpha$ on $M$ satisfying the following properties:
 \begin{enumerate}
 \item $\alpha(TB)>0$;
 \item for all $t$, $d\alpha|_{\pi^{-1}(t)}$ is an area form; and
 \item $\mathcal{F}$ is strongly topologically conjugate to the characteristic foliation of $\xi$. 
 \end{enumerate}
 \end{definition} 
Note that condition (3) imposes that $\F_\pit$ has no circle leaves.

\begin{definition}\label{def:mxif} A \emph{foliated contact three-manifold} $(M, \xi, \mathcal{F})$ is a manifold with foliated boundary together with a contact structure $\xi$ on $M$ such that $\mathcal{F}$ is an open book foliation that is strongly topologically conjugate to $\mathcal{F}_{\xi}$.  \end{definition} 

\begin{theorem}\cite[Theorems 3.10, 6.9, 7.1, 7.2]{LV} \label{thm:support}
Any foliated open book supports a unique isotopy class of contact structures, and any foliated contact three-manifold $(M,\xi,\mathcal{F})$ admits a supporting foliated open book.  Two foliated open books for the same foliated contact three-manifold are related by positive stabilization.
\end{theorem}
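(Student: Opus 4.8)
The plan is to prove the three assertions in turn, following the template of the Giroux correspondence for closed manifolds and adapting each step to the presence of boundary.

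\emph{A supporting contact structure exists.} Given $(B,\pi)$, I would build a one-form $\alpha$ by a relative Thurston--Winkelnkemper argument. First fix a model one-form in a neighbourhood of $B$ so that $\alpha(TB)>0$ and $d\alpha$ restricts to an area form on the local pages; away from $B$ and away from the boundary critical points, $\pi$ is a genuine fibration, and one sets $\alpha = \beta + C\,d\theta$ where $\beta$ is a fibrewise primitive of a page area form, $\theta$ is the $S^1$-coordinate, and $C$ is a large constant, interpolating with the binding model. The only new feature relative to the closed case is the behaviour near a boundary critical point; there I would use explicit local models --- an index-$2$ critical point near a positive hyperbolic singularity of $\F_\pit$, an index-$1$ critical point near a negative hyperbolic singularity --- and check directly that conditions (1)--(3) of Definition~\ref{def:support} hold. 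Condition (3) is local along $\partial M$ and follows by inspecting the characteristic foliation in these models and near the elliptic points.

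\emph{The supported contact structure is unique up to isotopy.} Given two one-forms $\alpha_0,\alpha_1$ as in Definition~\ref{def:support}, I would first homotope the auxiliary data (the binding model, the page area forms, the constant $C$) so that $\alpha_0$ and $\alpha_1$ agree near $B$ and have the same germ-up-to-conjugacy along $\partial M$, then take the linear path $\alpha_s=(1-s)\alpha_0+s\alpha_1$. Conditions (1) and (2) are preserved under convex combinations, so for $C$ large enough $\alpha_s$ is contact for all $s$, and a version of Gray stability rel boundary --- using that the boundary characteristic foliation is carried along through strong topological conjugacy --- produces the desired isotopy of contact structures.

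\emph{Existence of a supporting foliated open book and uniqueness up to positive stabilization.} The boundary $\partial M$ is convex, since its characteristic foliation is strongly topologically conjugate to $\F$ and hence admits the dividing set $\Gamma$; so I would double $M$ along $\partial M$ to obtain a closed contact manifold $(Y,\xi_Y)$ containing $\Sigma := \partial M$ as a separating convex surface. Giroux's theorem gives a supporting open book on $Y$; after an isotopy (and, if needed, a stabilization) its pages meet $\Sigma$ generically, and restricting the binding and fibration to $M$ yields a candidate foliated open book. Because $\Sigma$ is convex, the trace of the pages realises its characteristic foliation through a dividing-set-preserving isotopy --- i.e. a strong topological conjugacy --- so this restriction supports $(M,\xi,\F)$. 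For the uniqueness statement, given two foliated open books for $(M,\xi,\F)$ I would double as above, compare the resulting closed open books using Giroux's uniqueness up to positive stabilization, and then check that each stabilization of the closed open book can be performed in the interior of one of the two copies of $M$ --- or isotoped off $\Sigma$ --- so that it descends to a \emph{positive} foliated stabilization; the positivity is exactly what the orientation conventions on the hyperbolic singularities of $\F_\pit$ record.

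\emph{Main obstacle.} The hard part is the last assertion: as with every Giroux-type uniqueness statement, controlling the sequence of moves is delicate, and here one must additionally verify that the closed-manifold stabilizations can always be arranged to restrict to \emph{positive} foliated stabilizations and never force a negative one. A secondary difficulty, already present in the existence half, is arranging the cut-and-paste with $(Y,\xi_Y)$ so that the characteristic foliation of $\Sigma$ is literally an open book foliation matching $\F$ up to strong topological conjugacy, rather than merely up to isotopy of the underlying dividing curve.
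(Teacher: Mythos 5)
This statement is imported from \cite{LV} (Theorems 3.10, 6.9, 7.1, 7.2); the present paper does not reprove it, but it does summarize the existence argument in its proof of Theorem~\ref{thm:ot}. Your first two steps --- a relative Thurston--Winkelnkemper construction near $B$ and at the boundary critical models, followed by linear interpolation and Gray stability rel boundary --- are essentially the argument of \cite{LV} for the first assertion, and they are fine. The divergence, and the trouble, is in the last two assertions, where you propose doubling $M$ along its convex boundary and invoking Giroux's existence and uniqueness on the closed double. The actual proof in \cite{LV} goes the other way: it uses the foliation $\F$ itself to build a partial open book in a collar of $\partial M$ whose pages trace out exactly the prescribed (strongly topologically conjugate) foliation, and then extends into the interior by a contact cell decomposition in the style of Honda--Kazez--Mati\'c. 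This direct construction is not an aesthetic choice; it is what makes condition (3) of Definition~\ref{def:support} achievable.

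Concretely, the gap in your existence argument is the step ``after an isotopy its pages meet $\Sigma$ generically, and the trace of the pages realises its characteristic foliation through a strong topological conjugacy.'' Giroux's theorem on the double gives you \emph{some} open book, and a generic perturbation gives you \emph{some} open book foliation on $\Sigma$ with the right dividing set, but a foliated contact three-manifold $(M,\xi,\F)$ comes with a \emph{fixed} foliation $\F$ (a fixed number and signed pattern of elliptic and hyperbolic points and a fixed order of hyperbolic singularities), and nothing in the doubling argument lets you prescribe it. Matching a given open book foliation, not merely a dividing set, is the real content of the existence theorem, and you have relegated it to a ``secondary difficulty.'' The same issue sinks the uniqueness step: even granting stabilization equivalence on the double, a stabilization arc there may cross $\Sigma$, and you give no argument that the moves can be localized so as to descend to positive \emph{foliated} stabilizations preserving $\F$ --- you correctly identify this as the hard part, but identifying it is not proving it. As stated, the third and fourth assertions are not established by your outline.
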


Although we have not yet defined positive stabilization in this article, we note that it is an internal operation analogous to stabilization on other forms of open books.

\subsection{Preferred gradient-like vector fields and the monodromy}
As in the case of standard Morse theory, the benefit of a Morse function is fully realised only in the presence of a gradient-like vector field.  We will designate a  class of gradient-like vector fields as \textit{preferred} based on their compatibility with the open book foliation on $\partial M$.  Suppose that $\mathcal{F}_\pit$ is the open book foliation on a manifold $M$ supporting some contact structure with convex boundary.  As $\mathcal{F}_\pit$ has no circle leaves, it decomposes the surface as a union of square tiles, each of which contains a single hyperbolic singularity in the middle, four elliptic singularities at the corners, and four connected components of leaves as edges.   In fact, one may recover the entire open book foliation up to  topological equivalence by decomposing a surface into squares labeled with signs, signed corners, and an order in which the hyperbolic singularities appear.  See Figure~\ref{fig:grad} for an illustration of a single tile and Figure~\ref{fig:otnbhd} for an example of a surface decomposed into squares.

\begin{definition}  A gradient-like vector field $\nabla \pi$ is \emph{preferred} if it is tangent to $\partial M$ on $\partial M$ and if the flowlines on each tile of the foliated surface $\partial M$ are isotopic to those shown in Figure~\ref{fig:grad}.  

\begin{figure}[h!]
\begin{center}
\includegraphics[scale=0.8]{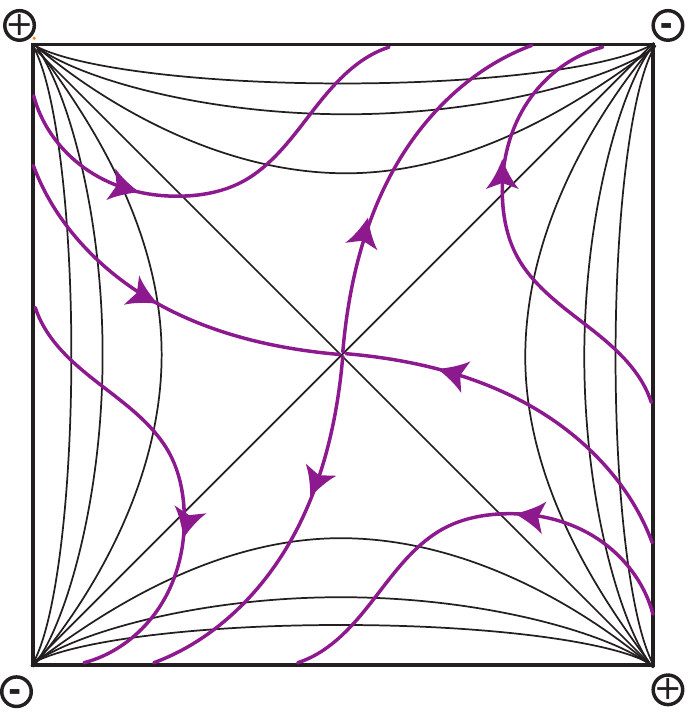}
\caption{ Flowlines of a preferred gradient-like vector field on the tile defined by a hyperbolic point.  The hyperbolic point could be positive or negative. }\label{fig:grad}
\end{center}
\end{figure}
\end{definition}

We will assume henceforth that $\nabla \pi$ is always preferred.

Said slightly differently, $\nabla \pi$ is preferred if it is the extension to $M$ of some $\nabla \pit$ with presecribed properties on $\partial M$.   The condition on flowlines is chosen so that topological properties of the original foliation are reflected in topological properties of the flowlines of $\nabla \pit$.  Specifically, we may consider the graph formed by the positive separatrices of positive hyperbolic points, just as above in the definition of $\Gamma$. 
The flowlines of the gradient-like vector field spiral around the elliptic points infinitely many times, but we may nevertheless define a similar graph: replace each positive elliptic point by the connected component of $\pit^{-1}(0)$ which terminates at it, and let the graph be this component together with the union of positive flowlines of $\nabla \pit$ from positive hyperbolic points truncated when they hit these intervals first.  After a deformation retract of the leaf to its positive endpoint, these two graphs are isotopic.  The analogous statement holds for the analogously defined graph constructed from negative separatrices of negative elliptic points.  

\begin{figure}[h!]
\begin{center}
\includegraphics[scale=0.8]{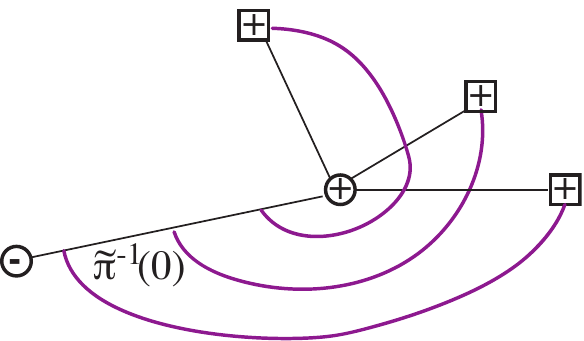}
\caption{  The squares represent positive hyperbolic points with a positive separatrix terminating at the shown positive elliptic point (circle).  The separatrices of $\nabla \pit$ intersect $\pit^{-1}(0)$ in the same order that the separatrices of $\mathcal{F}$ intersect the elliptic point. }\label{fig:star}
\end{center}
\end{figure}

We now assume every foliated open book is equipped with a preferred gradient-like vector field, and we write $(B, \pi, \mathcal{F}_\pit, \nabla\pi)$ to denote the additional data described above.  Choosing a gradient-like vector field has the immediate consequence of determining  critical submanifolds associated to each hyperbolic point.   In fact, we will truncate each critical submanifold at its first intersection with $\pi^{-1}(0)$. If $h^+$ is a positive hyperbolic point of $\mathcal{F}_\pit$ (i.e., an index-two critical point of $\pi$) with $\pi(h^+)=c_+$, its stable submanifold  intersects each level set of $\pi^{-1}[0, c_+]$.  Denote these intersections by $\gamma^+$, specifying the level set containing $\gamma^+$ if necessary.  Similarly, the unstable critical submanifold of a negative hyperbolic point $h^-$ (i.e., an index-one critical point of $\pi$) with $\pi(h^-)=c_-$ intersects each level set of $\pi^{-1}[c_-, 1]$ and we denote each of these intersections by $\gamma^-$. Observe, as in Figure~\ref{fig:submfd}, that if the unstable and stable critical submanifolds intersect, then  a single critical submanifold may be represented on subsequent pages by multiple arcs.

\begin{figure}[h!]
\begin{center}
\includegraphics[scale=0.6]{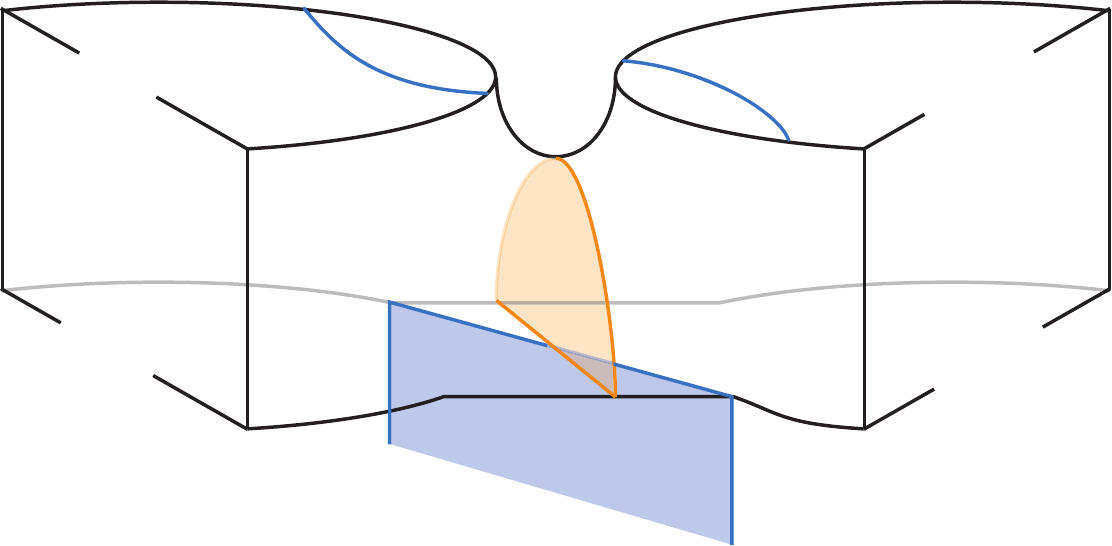}
\caption{ The orange descending critical submanifold intersects the blue ascending critical submanifold, cutting the intersection curve on all subsequent pages.}\label{fig:submfd}
\end{center}
\end{figure}

\begin{example}\label{ex:torusnotsorted} As a first example, we consider a Morse foliated open book for a solid torus.  Begin with an embedding of the solid torus in $S^3$ as shown in Figure~\ref{fig:notsorted}.  As proven in \cite{LV}, the function defined as the radial coordinate of the embedding  may be perturbed near the boundary of the solid torus so that the resulting restriction defines a Morse foliated open book with the same critical points on the boundary.   There are four critical points, alternating in sign, and  $S_1, S_2,$ and $S_3$ in Figure~\ref{fig:notsorted} are pages ---that is, level sets--- for regular values separated by critical points. Pages $S_0$ and $S_4$ are not separated by a critical level, but we include both in order to see the intersections of both the ascending (on $S_4)$ and descending (on $S_0$) critical submanifolds. 

\begin{figure}[h!]
\begin{center}
\includegraphics[scale=0.8]{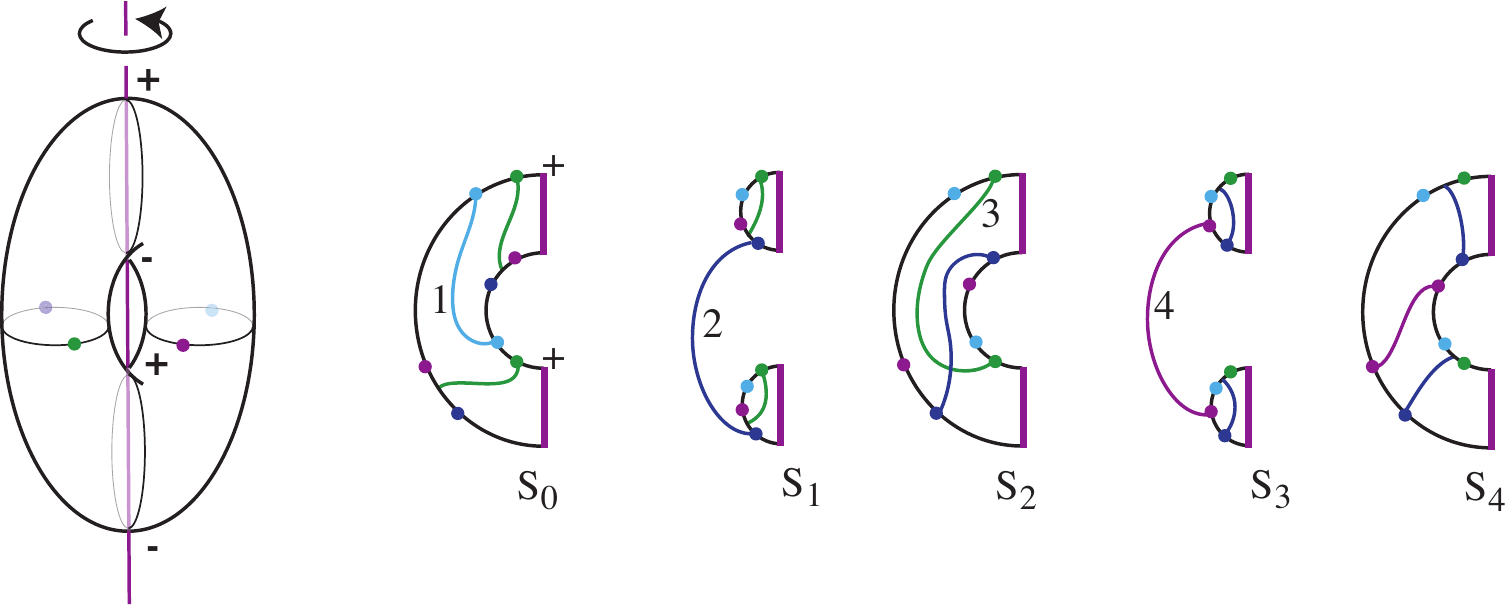}
\caption{ Level sets $S_q:=\pi^{-1}(\frac{q\pi}{2}+\epsilon)$ for a Morse foliated open book on a solid torus, together with their intersections with the truncated critical submanifolds.  The bold dots along the non-binding boundary of the pages indicate the relative positions of the intersection arcs $\gamma^\pm$, a consequence of choosing $\nabla \pi$ to be preferred.}\label{fig:notsorted}
\end{center}
\end{figure}
\end{example}

\begin{lemma} The  truncated stable critical submanifolds are disjoint.  Similarly, the truncated unstable critical submanifolds are disjoint.
\end{lemma}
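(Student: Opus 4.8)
The plan is to derive both statements from one observation: distinct flowlines of the (smooth, hence uniquely integrable) gradient-like vector field $\nabla\pi$ are disjoint, and a flowline lying in a stable critical submanifold limits, in forward time, to a single critical point. Fix a positive hyperbolic point $h^+$, so that $\pi$ has a nondegenerate index-two critical point there; its local stable manifold $W^s_{\mathrm{loc}}(h^+)$ is two-dimensional, forward-invariant, and has every point converging to $h^+$. By construction the truncated stable critical submanifold of $h^+$ is the union of the flowline segments obtained by pushing $W^s_{\mathrm{loc}}(h^+)$ backward under the flow until the first return to $\pi^{-1}(0)$; in particular every point of it lies on a flowline whose forward orbit eventually enters $W^s_{\mathrm{loc}}(h^+)$ and hence converges to $h^+$.

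Now suppose a point $p$ lies in the truncated stable critical submanifolds of two positive hyperbolic points $h_1^+\neq h_2^+$. The forward orbit of $p$ is a single flowline, and it would have to enter both $W^s_{\mathrm{loc}}(h_1^+)$ and $W^s_{\mathrm{loc}}(h_2^+)$ --- which can be taken disjoint --- and to converge to each of $h_1^+$ and $h_2^+$; this is impossible. (Equivalently, and using the hypotheses more directly: along the lifted flowline $\pi$ is strictly increasing and bounded above, tending to the value of $\pi$ at the limiting critical point, so $\pi(h_1^+)=\pi(h_2^+)$; the requirement that $\pi$ have a unique critical point for each critical value then forces $h_1^+=h_2^+$.) Hence the truncated stable critical submanifolds of distinct positive hyperbolic points are disjoint. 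The statement for negative hyperbolic points is the time-reversal of this one: the truncated unstable critical submanifold of a negative hyperbolic point $h^-$, a nondegenerate index-one critical point of $\pi$, is swept out by flowline segments whose backward orbits converge to $h^-$, and a point shared by two such submanifolds would produce a flowline converging, in backward time, to two distinct critical points.

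Finally, it is worth recording what the lemma does \emph{not} assert: a stable critical submanifold and an unstable one may genuinely intersect, as in Figure~\ref{fig:submfd}, since a shared point simply lies on a flowline running from a negative hyperbolic point up to a positive one --- and there is no contradiction in that. The one point that requires a moment's care --- though I would not expect it to be a real obstacle --- is checking that, despite the $S^1$-valued target (so flowlines may wind around several times before being truncated) and the boundary critical points (near which the flowlines of $\nabla\pit$ spiral infinitely around elliptic points), each truncated critical submanifold really is a union of flowline segments each carrying a single forward, respectively backward, asymptotic critical point. This is a local-to-global consequence of uniqueness of solutions of the defining ODE, insensitive to the winding and the spiralling; once it is granted, the disjointness follows as above.
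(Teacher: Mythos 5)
Your argument is correct for what the lemma literally asserts, but it runs along a different track from the paper's. The paper's proof is a one-line appeal to the Morse--Smale property of $\nabla\pit$, which on a surface means precisely that there are no saddle--saddle connections; this is what forbids a flowline running from one positive hyperbolic point $h_1^+$ up to another $h_2^+$ (such a connection would necessarily lie in $\partial M$, since the unstable manifold of an index-two boundary critical point is the pair of unstable separatrices of the saddle of $\pit$). You instead use uniqueness of solutions of the flow ODE together with uniqueness of forward (resp.\ backward) asymptotic limits, which shows that the stable manifolds of distinct critical points are disjoint \emph{as point sets} --- a fact that holds with no transversality hypothesis at all. What your route does not capture, and what the Morse--Smale citation is really doing, is the behaviour at the closure: if there were a connecting orbit from $h_1^+$ to $h_2^+$, then a stable separatrix of $h_2^+$ would limit backward onto $h_1^+$ and never reach $\pit^{-1}(0)$, so the ``truncated stable critical submanifold'' of $h_2^+$ would fail to be a properly embedded surface meeting each level set of $\pi^{-1}[0,c_2]$ in an arc, and its closure would meet the stable submanifold of $h_1^+$. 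Since the arcs $\gamma_i^+$ are used to cut $P$ out of the page, the geometric content of the lemma is that these are disjoint properly embedded closed arcs, and for that one genuinely needs the no-saddle-connection condition rather than only uniqueness of limits. Your proof buys a more elementary and hypothesis-free argument for set-theoretic disjointness; the paper's buys the stronger, geometrically usable conclusion. Your closing paragraph correctly isolates the stable--unstable intersections as permitted, but the ``moment's care'' you flag is about winding and spiralling, whereas the care actually required is about same-sign connecting orbits.
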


\begin{proof} The lemma follows from the fact that $\nabla \pit$ is Morse-Smale.
\end{proof}

We now consider the flowline through an arbitrary point $p$ in the interior of $\pi^{-1}(0)$.  If $p\in \gamma_i^+$ for some positive hyperbolic point $h_i^+$, then the flowline through $p$ will terminate at $h_i^+$.  However, for all points in the complement of the $\{\gamma_i^+\}$, there is a well-defined first-return map.  Define $P:=\pi^{-1}(0)\setminus (\cup_i \gamma_i^+)$.

\begin{definition} The \emph{monodromy} $H: P \rightarrow P$ of a foliated open book is the first return map of $\nabla \pi$.  
\end{definition}

\begin{remark}This use of the term monodromy differs slightly from the map $h$ called the monodromy in \cite{LV}, \cite{AFHLPV}.   The domain of $H$ is a proper subset of the $S_0$ page, while $h$ is a homeomorphism from the final page to the initial on.  On $P$, $H=h\circ \iota$.
\end{remark}

\subsection{From foliated to partial open books} Foliated and partial open books are alternative ways to decompose contact manifolds with boundary, and unsurprisingly, they are closely related. To each foliated open book we associate a triple $(S, P, H)$, where $S:=\pi^{-1}(0)$, and $P$ and $H$ are as above.  Although $S$ and $P$ may be viewed as abstract surfaces, rather than embedded ones, we retain the identifications  $P\subset S$ and $\partial S=B \cup \pit^{-1}(0)$, where $B=\partial S\cap \partial P$.  Under certain circumstances described below, the triple associated to a foliated open book in fact defines a partial open book for the same contact manifold.  

\begin{remark} In \cite{LV},  $P$ is defined by removing a neighborhood of the arcs $\gamma_i^+$ together with the non-binding boundary of $S$.  However, the resulting subsurface is isotopic to the $P$ defined above and the results stated for triples are all independent of this choice.
\end{remark}

\begin{definition} A Morse foliated open book $(B, \pi, \mathcal{F}_\pit, \nabla \pi)$ is \emph{sorted} if there are no flowlines contained in $\pi^{-1}(0,1)$ between distinct critical points.\end{definition} 

Equivalently, a foliated open book is sorted if the set of all truncated critical submanifolds is disjoint.  We note that this definition highlights the role of preferred gradient-like vector fields, as the constraints on the boundary will force intersections that could otherwise be avoided.

\begin{proposition}[ref in \cite{LV}]
Suppose that $(S, P, H)$ is the triple associated to a sorted foliated open book.  If $S$ may be built up from $S\setminus P$ by attaching one-handles along $0$-spheres embedded in $P$, then the triple $(S, P, H)$ defines a partial open book for $(M, \partial M, \Gamma)$. 
\end{proposition}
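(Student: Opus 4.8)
The plan is to establish the two claims bundled into the conclusion: that $(S,P,H)$ is a genuine abstract partial open book, and that the contact manifold-with-divided-boundary it presents is $(M,\partial M,\Gamma)$ carrying the contact structure $\xi$ supported by $(B,\pi)$. Given both, we are finished: a partial open book determines its underlying manifold, dividing set, and contact structure up to the relevant equivalence by Honda-Kazez-Matic \cite{hkm09}, while $\xi$ is pinned down up to isotopy by Theorem~\ref{thm:support}.

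Checking that $(S,P,H)$ is a legitimate triple is largely bookkeeping. The hypothesis that $S$ is obtained from $\overline{S\setminus P}$ by attaching one-handles along $0$-spheres in $P$ is exactly the surface-pair requirement in the definition of a partial open book, so nothing more is needed there. That $H$ is a diffeomorphism onto its image follows at once from its description as the first-return map of the flow of the preferred gradient-like vector field $\nabla\pi$, restricted to the subsurface $P\subset\pi^{-1}(0)$ on which that return is defined: the flow is transverse to the pages, so the return map is a local diffeomorphism, and it is injective because flowlines do not cross. Finally, $H$ restricts to the identity on $B=\partial S\cap\partial P$, since near the binding the foliated open book has the standard form $\pi=\theta$ on the normal disc bundle of $B$, under which the first return fixes a collar of $B$ in $\pi^{-1}(0)$ pointwise; one checks similarly that $S\setminus H(P)$ is a union of one-handle-type neighborhoods of the truncated unstable submanifolds of the index-one critical points, which is the shape a partial open book demands of $S\setminus H(P)$.

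The substance of the proof is the identification of $M$ with the partial-open-book model $M(S,P,H)$, and this is where the sorted hypothesis does its work. By the Lemma together with the definition of sorted, the full collection of truncated critical submanifolds is pairwise disjoint --- equivalently, no flowline of $\nabla\pi$ in $\pi^{-1}(0,1)$ runs between distinct critical points. A standard rearrangement, using exactly this disjointness, then deforms $\pi$ through functions of the same type so that, reading once around $S^1$ from the page $S=\pi^{-1}(0)$, one meets first all critical points of one index and then all of the other, with a critical-value-free interval between; this deformation leaves unchanged, up to isotopy, the supported contact structure, the subsurface $P$, and the monodromy $H$. Since an index-two critical point cuts a page along an arc, an index-one critical point attaches a one-handle, and the two counts agree because the page is recovered after a full turn, the rearranged open book displays $M$ as a cutting cobordism, a product cobordism, and a one-handle-attachment cobordism glued end to end --- precisely the mapping-torus-type shape of $M(S,P,H)$, built from $\pi^{-1}(0)\times[-1,0]$, a copy of $P\times[0,1]$, and the regluing by $H$. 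Matching the two yields a diffeomorphism of pairs $M(S,P,H)\to(M,\partial M)$ carrying the dividing set of the partial open book --- which runs along $\partial\overline{S\setminus P}$ --- to $\Gamma$, the boundary of a neighborhood of the positive separatrices of the positive hyperbolic points, since both are neighborhood boundaries of the arcs $\gamma_i^+$. Transporting across this diffeomorphism a contact form $\alpha$ adapted to $(B,\pi)$ in the sense of Definition~\ref{def:support} exhibits it as a form adapted to the partial open book $(S,P,H)$, whence the two decompositions present the same $(M,\partial M,\Gamma,\xi)$.

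The main obstacle is the heart of that last paragraph. First, the rearrangement of critical values must be made honest: one needs an ambient isotopy realising a permutation of disjoint critical submanifolds, together with a verification that it preserves the triple $(S,P,H)$ and the supported contact structure and interacts correctly with the binding. Second, the diffeomorphism onto $M(S,P,H)$ must be set up carefully enough to confirm that it matches binding to binding, page to page, and dividing set to dividing set. It is here that the handle hypothesis is indispensable: in the foliated setting pages may carry arbitrary topology, and it is this hypothesis that forces the cutting block and the handle-attachment block to fit together into the partial-open-book pattern rather than into some more general decomposition of $M$.
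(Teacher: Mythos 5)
The paper does not actually prove this proposition; it is quoted from \cite{LV}, and your overall strategy --- verify the partial open book axioms for the triple, then use sortedness to reorder the critical values so that all index-two (cutting) critical points precede all index-one (handle-attaching) ones, and finally match the resulting block decomposition of $M$ with the Honda--Kazez--Matic model --- is indeed the route taken there. The preliminary checks you make are fine: the Euler characteristic argument for the equality of the two counts is correct, and the identification of $H$ as an embedding fixing $B$ pointwise follows from the standard form of $\pi$ near the binding as you say.

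The difficulty is that your own closing paragraph concedes that the two load-bearing steps are not carried out, and they are the entire content of the proposition. First, the rearrangement: you assert that ``a standard rearrangement, using exactly this disjointness, deforms $\pi$'' to put the critical values in the desired order while preserving $(S,P,H)$ and the supported contact structure. This is true, but it is not a citation to standard Morse theory --- the critical points lie on $\partial M$, the gradient-like field is constrained to be preferred (tangent to $\partial M$ with prescribed flowlines on each tile), and one must check that the reordering can be done through functions satisfying Definition~2.1 and through preferred gradient-like fields, so that the truncated critical submanifolds, and hence $P$ and $H$, change only by isotopy. Second, the identification with $M(S,P,H)$: you need to exhibit $\pi^{-1}[0,\frac12]$ (the cutting block) and $\pi^{-1}[\frac12,1]$ (the handle block) as $S\times[-1,0]/\!\sim$ and $P\times[0,1]/\!\sim$ respectively, verify that the gluing at $t=1$ is implemented by $H$, and check that the dividing set of the model is carried to $\Gamma$; your justification for the last point (``both are neighborhood boundaries of the arcs $\gamma_i^+$'') conflates arcs on the page $\pi^{-1}(0)$ with the positive separatrices of $\mathcal{F}_{\wtpi}$ on $\partial M$, and needs the discussion around Figure~\ref{fig:star} relating the two. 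Until these steps are executed rather than flagged as ``the main obstacle,'' the argument is a correct outline but not a proof.
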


As in the case of other forms of open books, foliated open books admit an operation called \emph{positive stabilization} which preserves the supported contact structure.  Positive stabilization is equivalent to taking an appropriate connect sum with a foliated open book for the standard tight $S^3$.  A positive stabilization is determined up to equivalence by a choice of properly embedded arc  $\gamma\subset \pi^{-1}(t)$ with $\partial \gamma\subset B$, and it changes each level set of $\pi$ by attaching a $1$-handle along $\partial \gamma$ which becomes part of $P$.  The monodromy of the foliated open book changes by a positive Dehn twist along the circle formed by $\gamma$ and core of the added $1$-handle, restricted to $P$.


\section{Right-veering monodromies and examples}\label{sec:rv}

\begin{definition} Let $\gamma, \delta$ be propertly embedded arcs with $\partial \gamma=\partial \delta$.  We write $\delta < \gamma$ if, after isotoping the two arcs relative to their shared boundary so that they intersect minimally, $\delta$ does not lie to the left of $\gamma$ near either endpoint. If $\delta<\gamma$ or $\delta$ is isotopic to $\gamma$, then we write $\delta \leq \gamma$.

\end{definition}

Consider a surface $S$ and a subsurface $P\subset S$. Let $H:P\rightarrow S$ be an embedding  which restricts to the identity on $\partial P \cap \partial S$.  

\begin{definition} Given $(S, P, H)$ as above, $H:P\rightarrow S$ is \emph{right-veering} if for every $\gamma$ properly embedded in $P$ with $\partial \gamma \subset (\partial P\cap \partial S)$, $H(\gamma) \leq \gamma$. 
\end{definition}

\begin{definition} The  foliated open book $(B, \pi, \mathcal{F}_\pit, \nabla \pi)$ is \emph{right-veering} if the associated triple $(S, P, H)$ is right-veering.
\end{definition}

We note that this definition does not depend on the choice of preferred $\nabla\pi$.  Any two preferred gradient-like vector fields for a fixed $\pi$ agree near $\partial M$, and they are connected by a path of preferred gradient-like vector fields all fixed near the boundary.  In particular, this implies that the flowlines on the boundary are preserved throughout the interpolation.  It follows that  the arcs $\gamma_{i}^+$ on $S$ may change only by  isotopy, as an arc slide would required the Morse-Smale condition to fail at some point.

When the triple $(S, P, H)$ defines a partial open book, the previous two definitions exactly coincide with those of Honda-Kazez-Matic.  However, we observe that the definitions here are broader in scope, applying to the triple $(S, P, H)$ associated  to an arbitrary Morse foliated open book.  

\begin{example}Consider the solid torus of Example~\ref{ex:torusnotsorted}. As seen in Figure~\ref{fig:notsorted}, $P$ consists of a union of four discs.  It follows that the monodromy $H$ restricted to each component is trivial, so the Morse foliated open book is necessarily right-veering. Note, however, that this triple does not define a partial open book; since some components of $P$ meet $S\setminus P$ along a single curve, $S$ cannot be built up from $S\setminus P$ by one-handle addition.
\end{example}  

We next show an advantage of extending these definitions; informally, it allows one to recognize left-veering monodromy (and hence, overtwistedness) in simpler objects. The following example begins with a left-veering Morse foliated open book that does not define a partial open book.  However, after performing a single stabilization, the resulting Morse foliated open book defines a right-veering partial open book.

\begin{example}\label{ex:stabtorv}

We describe $(B, \pi, \mathcal{F}_\pit, \nabla \pi)$ via three of its regular pages; in fact, this determines  several distinct foliated open books depending on the order in which the positive (respectively, negative) hyperbolic points appear in the foliation, but the example does not depend on this choice.  Suppose that the map  $h: \pi^{-1}(1-\epsilon)\rightarrow \pi^{-1}(\epsilon)$ is a left-handed Dehn twist relative to the page, so that the dotted arc in Figure~\ref{fig:lvfob} is evidently left-veering.  Note, too, that the associated triple does not define a partial open book, as the bigon components of $P$ cannot be built up from the rest of the page by attaching one-handles.
\begin{figure}[h!]
\begin{center}
\includegraphics[scale=.8]{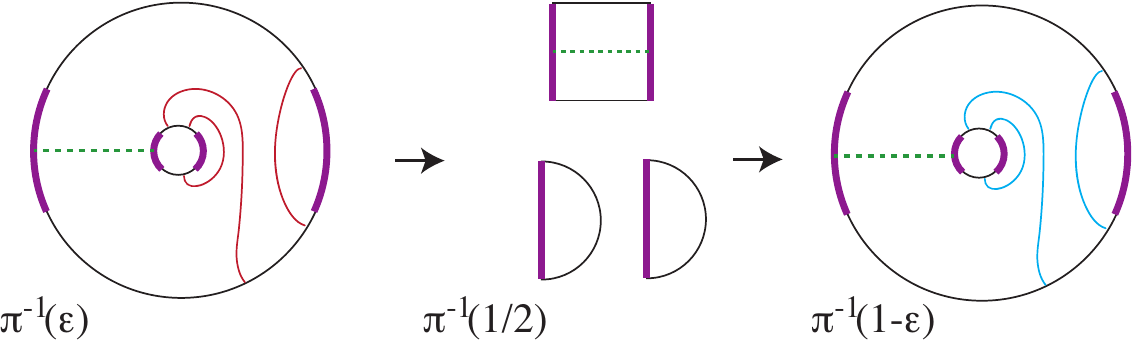}
\caption{   Selected regular pages decorated with their intersections with the truncated critical submanifolds.}\label{fig:lvfob}
\end{center}
\end{figure}
 
 Now stabilize the foliated open book along $\gamma\subset \pi^{-1}(1-2\epsilon)$ of Figure \ref{fig:rvpob}, adding a one-handle to $P$ on each page and changing the monodromy by a positive Dehn twist on $\pi^{-1}(1-2\epsilon)$.  After the dotted arc flows through this page, it is disjoint from the core of the original annulus, so it remains unaffected by the negative Dehn twist.  Since $P$ consists of just two rectangular components, it is easy to verify that there are no left-veering arcs.  
 
 \begin{figure}[h!]
\begin{center}
\includegraphics[scale=.8]{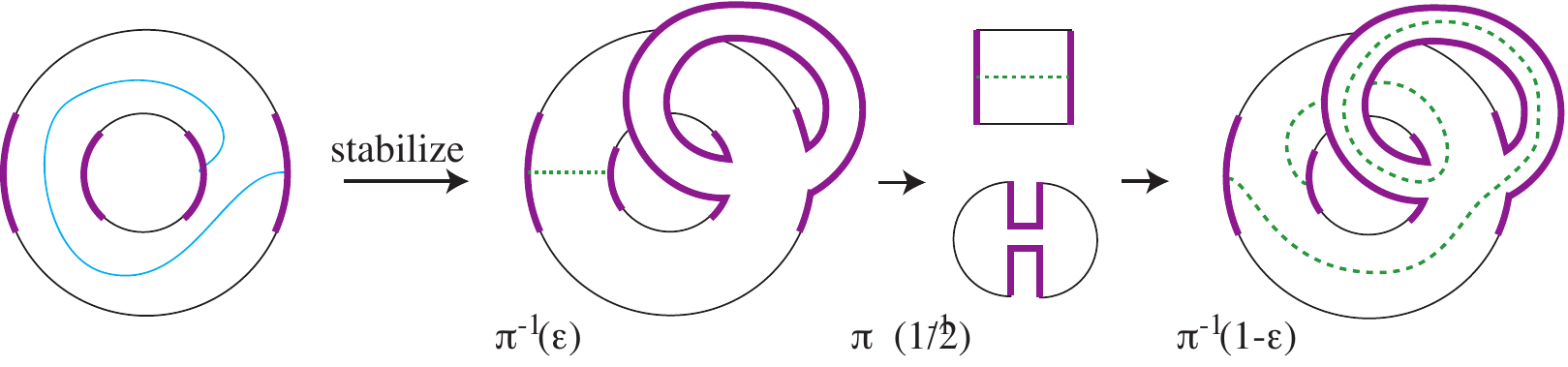}
\caption{ After stabilizing along the indicated arc, the flow changes by a positive Dehn twist. }\label{fig:rvpob}
\end{center}
\end{figure}

The appeal of this example lies in the fact that it detects left-veering behavior ---and hence, overtwistedness--- in a simple object, a foliated open book whose associated triple satisfies weaker conditions than those required by a partial open book.  However, it also highlights the difference between equivalence classes of contact manifolds with foliated boundary and those with merely convex boundary.  Honda-Kazez-Matic have shown that every overtwisted contact manifold with convex boundary is supported by some left-veering open book; in fact, the partial open book of Figure~\ref{fig:rvpob} is a stabilization of a left-veering open book, but this stabilization changes $|B|$, and hence does not preserve the foliated boundary.  

\begin{figure}[h!]
\begin{center}
\includegraphics[scale=1.0]{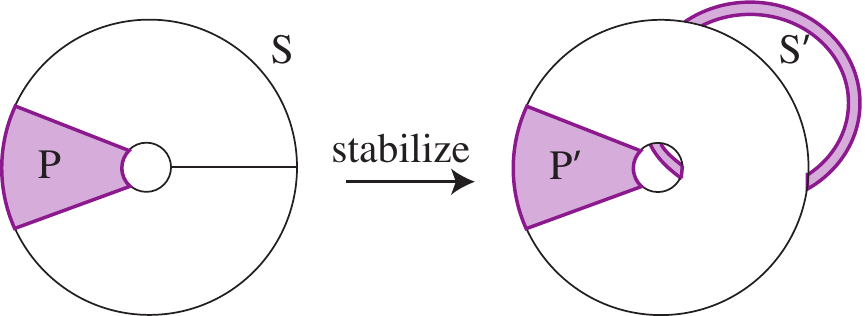}
\caption{This partial open book stabilization preserves the equivalence class of the convex boundary, but not the foliated boundary. }\label{fig:lvpob}
\end{center}
\end{figure}

\end{example}

Nevertheless, a left-veering arc in a foliated open book does  play the same role as in other forms of open books:

\begin{theorem}\label{thm:ot} 
A contact three-manifold with foliated boundary is overtwisted if and only if it is supported by a foliated open book that is not right-veering.
\end{theorem}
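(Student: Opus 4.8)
The plan is to prove the two implications separately, mirroring the Honda--Kazez--Mati\'c picture in the closed and partial cases: from a non-right-veering supporting foliated open book one extracts an overtwisted disc directly, and conversely one feeds an overtwisted disc back into a non-right-veering foliated open book by an internal negative stabilization. The first direction is where the geometric input of \cite{IKtot} enters; the second is where the flexibility of overtwisted contact structures (Eliashberg's classification) is used.

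For ``not right-veering $\Rightarrow$ overtwisted'': suppose $(B,\pi,\mathcal{F}_\pit,\nabla\pi)$ supports $(M,\xi,\mathcal{F})$ and the associated triple $(S,P,H)$ is not right-veering, witnessed by a properly embedded arc $\gamma\subset P$ with $\partial\gamma\subset B$ whose image $H(\gamma)$, in minimal position, lies to the left of $\gamma$ near one endpoint. Since $\gamma\subset P$ the whole arc returns, so I would sweep $\gamma$ once around from $S_0=\pi^{-1}(0)$ by the flow of $\nabla\pi$, obtaining an embedded surface in $M$ whose characteristic foliation is --- using that $B$ is positively transverse to $\xi$ and the pages are $d\alpha$-positive (Definition~\ref{def:support}) --- strongly topologically conjugate to an open book foliation carrying only the singularities forced by $\mathcal{F}$ near $B$ together with the bigon between $\gamma$ and $H(\gamma)$. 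The left-veering condition is exactly the hypothesis under which \cite{IKtot} caps this off to an overtwisted disc in $\mathrm{int}(M)$ --- the argument already advertised in the introduction --- so $(M,\xi,\mathcal{F})$ is overtwisted. I expect the one finicky point to be tracking the singular foliation on the swept surface when $\gamma$ crosses several of the truncated critical submanifolds $\gamma_i^\pm$.

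For ``overtwisted $\Rightarrow$ some supporting foliated open book is not right-veering'': by Theorem~\ref{thm:support}, start with any supporting Morse foliated open book $(B,\pi,\mathcal{F}_\pit,\nabla\pi)$. I would perform the negative analogue of the positive stabilization recalled in the text: choose an arc $\delta\subset\pi^{-1}(t)$ with endpoints on an interior portion of $B$, inside a ball disjoint from $\partial M$, and connect-sum with the (honest) open book for an overtwisted $(S^3,\xi_{ot})$ lying in the homotopy class of the tight structure --- concretely, the annulus with a single left-handed Dehn twist. As for positive stabilization this attaches a one-handle to every page along $\partial\delta$, with cocore $\delta'$ a properly embedded arc in $P'$ whose endpoints lie on a new circular binding component in $\mathrm{int}(M)$, and composes the monodromy with a \emph{left}-handed Dehn twist along $\delta$ together with the handle core; because it is supported in an interior ball it leaves $\partial M$ and $\mathcal{F}_\pit$ untouched. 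The resulting foliated open book therefore supports $(M,\xi)\,\#\,(S^3,\xi_{ot})$, which --- because $\xi$ is already overtwisted and $\xi_{ot}$ is homotopic to the standard tight structure --- agrees with $(M,\xi)$ up to isotopy fixing $\partial M$ by Eliashberg's classification of overtwisted contact structures relative to the boundary; so it still supports $(M,\xi,\mathcal{F})$. Finally, the standard computation that a negative stabilization creates a left-veering cocore arc (cf.\ \cite{HKM09_HF}) shows $\delta'$ is left-veering, so this foliated open book is not right-veering.

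The hard part is concentrated in the second direction and has two faces. First, one must verify that this ``internal negative stabilization'' is a genuine operation on Morse foliated open books: that it yields an $S^1$-valued Morse function with all critical points on $\partial M$, a preferred gradient-like vector field extending the given one, and an unchanged boundary foliation --- this should run parallel to the construction of positive stabilization in \cite{LV}, but it needs checking, and it is worth emphasizing that this is \emph{not} the partial-open-book stabilization of Example~\ref{ex:stabtorv}, precisely because the stabilizing arc is taken on $B$ rather than on the $\partial M$-portion of the page. Second, and more substantively, one needs that connect-summing an overtwisted $(M,\xi,\mathcal{F})$ with $(S^3,\xi_{ot})$ preserves the equivalence class of contact manifold with foliated boundary; this is the relative form of Eliashberg's theorem, applicable once one notes that both the overtwisted disc and the summing ball sit in $\mathrm{int}(M)$ and that $\xi_{ot}$ and the standard structure agree in homotopy class rel a ball. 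If one wishes to bypass Eliashberg, an alternative is to fix an overtwisted disc $D\subset\mathrm{int}(M)$, take a standard contact neighbourhood of it, and graft in a fixed model non-right-veering foliated open book supported by that neighbourhood --- but either way the essential content is the locality of the modification away from $\partial M$.
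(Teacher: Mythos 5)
Your argument is correct in outline but takes a genuinely different route from the paper in both directions. For ``not right-veering $\Rightarrow$ overtwisted,'' the paper does not construct the overtwisted disc directly: Proposition~\ref{prop:otfob} stabilizes the foliated open book --- first to sort it, then to make $S$ buildable from $S\setminus P$ by one-handle addition --- choosing every stabilizing arc disjoint from the left-veering arc, so that the associated triple becomes an honest left-veering \emph{partial} open book, and then cites Honda--Kazez--Mati\'c. Your sweep-the-arc-around construction via \cite{IKtot} is the route the paper only illustrates by example (Example~\ref{ex:tot}), where it explicitly warns that ``some adaptation is required because the topological type of the page changes with the $S^1$ parameter''; the step where the swept surface is ``capped off'' is not a one-line application of \cite{IKtot} once $\gamma$ crosses the truncated critical submanifolds, and avoiding exactly that adaptation is the point of the paper's stabilization argument. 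For the converse, the paper again stays constructive: following \cite[Section 8.5]{LV} and \cite[Proposition 4.1]{hkm09}, it takes a partial open book for a neighborhood of an overtwisted disc, joins it by a Legendrian arc to the boundary-adapted piece, extends via a contact cell decomposition, and observes that the non-right-veering arc survives the extension to a foliated open book. Your negative-stabilization-plus-Eliashberg version buys brevity but leaves two load-bearing steps undischarged: internal negative stabilization is not defined for Morse foliated open books in \cite{LV} (only positive stabilization is), so its compatibility with the Morse function, the preferred gradient-like vector field, and the supported structure must be verified from scratch; and the identification of $(M,\xi)\#(S^3,\xi_{ot})$ with $(M,\xi)$ rel boundary imports the relative form of Eliashberg's classification together with the $d_3$ bookkeeping, machinery the paper's proof never needs. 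If you retain your route, those are the two places the proof actually has to be written.
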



\begin{remark}
Observe that the overtwistedness of a contact 3-manifold with foliated boundary does not depend on the particular foliation on the boundary, but pnly on the dividing set $\Gamma$ associated to the foliation. Thus by the above theorem, if two contact structures differ only near  $\partial M\times I$, and there they are both foliated by convex surfaces $\partial M\times \{t\}$, then one of them is supported by a non-left veering foliated open book if and only if the other is. 
\end{remark}
The ``if'' direction of Theorem \ref{thm:ot} follows from the following result:

\begin{proposition}\label{prop:otfob} 
If a foliated open book is left-veering, then the supported contact structure is overtwisted.
\end{proposition}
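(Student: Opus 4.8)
The plan is to adapt, to the foliated setting, Ito--Kawamuro's construction \cite{IKtot} of an overtwisted disc from a left-veering arc. Since a foliated open book determines $\xi$ up to isotopy (Theorem~\ref{thm:support}) and overtwistedness is an isotopy invariant, it suffices to produce an overtwisted disc for one contact form $\alpha$ compatible with $(B,\pi,\F_\pit,\nabla\pi)$ in the sense of Definition~\ref{def:support}; once $\alpha$ is fixed, condition~(3) and strong topological conjugacy play no further role. I would stress at the outset that one cannot simply reduce to the classical or partial open book version of the statement: Example~\ref{ex:stabtorv} shows that the positive stabilisation used to convert a foliated open book into a partial open book can turn a left-veering foliated open book into a right-veering partial open book. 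The disc must therefore be found directly inside $M$.

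First I would fix a left-veering arc, i.e.\ a properly embedded $\gamma\subset P$ with $\partial\gamma\subset B$ such that, after isotoping $\gamma$ and $\delta:=H(\gamma)$ relative to $\partial\gamma$ into minimal position, $\delta$ is not isotopic to $\gamma$ and $\delta$ lies strictly to the left of $\gamma$ at an endpoint $p$, and then pass to a convenient such arc by choosing one for which the geometric intersection number $|\gamma\cap\delta|$ is least. As in \cite{IKtot}, this normalisation forces the local picture near $p$ to contain an embedded \emph{crescent} $\Delta\subset S$: a bigon one of whose sides is a subarc of $\gamma$, the other a subarc of $\delta$, with corners at $p$ and at the first intersection point of $\gamma$ with $\delta$ (or at the other endpoint of $\gamma$ if there is no interior intersection).

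Next I would assemble the candidate disc and compute its open book foliation. Because $\gamma\subset P$, every flowline of $\nabla\pi$ through an interior point of $\gamma$ is a first--return flowline and so meets $\pi^{-1}(0)$ again after finite time; moreover $P$ is disjoint from $\pit^{-1}(0)$, so $\gamma$ and its forward flow stay away from $\partial M$ except along $B$. Sweeping (a subarc of) $\gamma$ by $\nabla\pi$ produces an embedded surface carrying a one--parameter family of arcs interpolating between $\gamma$ and $H(\gamma)$; gluing this to the crescent $\Delta$ (and, near the other endpoint, to the analogous possibly degenerate bigon between $\gamma$ and $\delta$) and smoothing corners yields an embedded disc $D$ in the interior of $M$, transverse to the pages $\pi^{-1}(t)$ away from a single point. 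Following the template of \cite{IKtot}, the content of the construction is that, with $\gamma$ in the above normal form, the open book foliation $\Fob(D)$ of a subdisc of $D$ has the standard overtwisted form --- a single negative elliptic point and no hyperbolic points --- so that this subdisc is a \emph{transverse overtwisted disc}; left-veering of $\gamma$ is precisely what rules out the hyperbolic points that would otherwise be forced and so prevents $D$ from being destabilised. Since the presence of a transverse overtwisted disc in a supporting open book forces $\xi$ to be overtwisted \cite{IKtot}, this completes the argument.

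The hard part will not be the overtwisted--disc argument itself, which is essentially local near $\gamma\cup\Delta$ and hence insensitive to the (possibly arbitrary) topology of the pages, but rather the adaptation of the open book foliation machinery of \cite{IKtot} to the present setting, in which $M$ has boundary, $\pi$ is an $S^{1}$--valued Morse function whose critical points all lie on $\partial M$, and the preferred $\nabla\pi$ is tangent to $\partial M$ and spirals infinitely near the boundary elliptic points of $\F_\pit$. Concretely, I expect the two points needing care to be: (i) verifying that sweeping $\gamma$ by $\nabla\pi$ yields an embedded surface and that the resulting $D$ can be isotoped off $\partial M$, even though $\partial\gamma$ lies on $B$ and nearby flowlines wind once around $B$ in the $\pi$--direction --- this is exactly the behaviour the ``preferred'' hypothesis was designed to control, since it pins $\nabla\pi$ near the boundary to the model of Figure~\ref{fig:grad}; and (ii) checking that the computation of $\Fob(D)$ and the implication ``transverse overtwisted disc $\Rightarrow$ overtwisted'' transfer without change from the closed--manifold open books of \cite{IKtot} to the portion of an open book recorded by a Morse foliated open book.
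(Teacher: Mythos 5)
Your route --- building a transverse overtwisted disc directly from the left-veering arc \`a la Ito--Kawamuro --- is genuinely different from the proof in the paper, which never constructs a disc. The paper reduces to the partial open book case: it shows that the positive stabilizations needed to turn the foliated open book into a sorted one whose triple defines a partial open book can always be performed along stabilizing arcs disjoint from the fixed left-veering arc, so that arc survives into a left-veering partial open book for the same contact manifold, and Honda-Kazez-Matic's result finishes the argument. Your opening claim that ``the disc must therefore be found directly inside $M$'' is consequently false: Example~\ref{ex:stabtorv} only shows that one \emph{particular} stabilization destroys left-veering, not that every one does, and the entire content of the paper's proof is that the stabilizing arcs can be chosen away from $\gamma$. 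The disc-construction route is the one gestured at in the introduction and carried out for a specific example in Example~\ref{ex:tot}, but the paper deliberately does not use it for the general statement.

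Beyond the choice of route, the proposal has genuine gaps. First, your description of the target is wrong: a transverse overtwisted disc in the sense of \cite{IKtot} is \emph{not} a disc whose open book foliation has a single negative elliptic point and no hyperbolic points --- that is the characteristic-foliation picture of an overtwisted disc, not the open-book-foliation one. The Ito--Kawamuro model necessarily contains hyperbolic singularities (compare Example~\ref{ex:minot}, where the simplest transverse overtwisted disc carries four hyperbolic points, and Example~\ref{ex:tot}, where the movie of the constructed disc proceeds by saddle resolutions). So ``left-veering rules out the hyperbolic points'' cannot be the content of the construction; rather, left-veering governs the signs and the graph structure of the singularities. Second, the two steps you yourself flag as ``needing care'' --- embeddedness of the swept surface given the boundary behaviour of the preferred $\nabla\pi$, and the transfer of the machinery of \cite{IKtot} to pages whose topology changes with $t$ and to manifolds with boundary --- are precisely where the work lies, and they are not carried out; the paper's own Example~\ref{ex:tot} warns that ``some adaptation is required because the topological type of the page changes with the $S^1$ parameter.'' As written, this is a plan for a proof rather than a proof, and its key combinatorial claim about $\Fob(D)$ is incorrect.
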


\begin{proof} 
We prove this result via the analogous statement for partial open books.  Specifically, we will show that if a foliated open book has a left-veering arc, then we may construct a partial open book for the same contact manifold that also has a left-veering arc.  By the work of Honda-Kazez-Matic, this implies the supported contact structure is overtwisted.  If the triple associated to the foliated open book already defines a partial open book, there is nothing to do, so we consider the following case.

Suppose that $(B, \pi, \mathcal{F}_\pit, \nabla \pi)$ has a left-veering arc but the associated triple $(S, P, H)$ does not define a partial open book. It is always possible to stabilize the foliated open book so that the triple defines a partial open book, and we show that these stabilizations may be chosen to preserve the left-veering arc. 

A stabilization is completly determined by a choice of stabilizing arc on a page.  We show that under the conditions of the proposition, a sequence of stabilizing arcs may be chosen  that are both disjoint from the fixed left-veering arc $\gamma\subset P$ and which produce a partial open book as the associated triple.

Note first that there are two ways in which the triple associated to a Morse foliated open book may fail to define a partial open book.  First, the Morse foliated open book may not be sorted, and  second, if may be impossible to build $S$ up from $S\setminus P$ by one-handle addition.  

If a foliated open book is not sorted, \cite{LV} describes how to choose stabilizing arcs to produce a sorted version.  Starting from $t=0$, increase the $t$ value until the first time when a flowline between a pair of critical points appears.  The stabilization should be performed on a page intersecting this flowline, and except for at its endpoints, the arc should be chosen to lie in a neighborhood of the non-binding boundary and the intersections between critical submanifolds and the chosen page, $\pi^{-1}(t_0)$.  A left-veering arc on $\pi^{-1}(0)$ lies completely in $P$, and by definition, it will flow to an arc in $\pi^{-1}(t_0)$ which is disjoint from the critical submanifolds and the non-binding boundary of the page.  
The tstabilizing arcs involved in rendering a foliated open book sorted may be assumed disjoint from any left-veering arcs.  

If $(S, P, H)$ fails to define a partial open book because $S$ cannot be built up from $S\setminus P$ by one-handle addition, then there are components of $P$ which intersect $S \setminus P$ along a single interval in $\partial P$.  Fix one such component, and choose a next component with respect the the boundary orientation around $S$.  Choose a boundary parallel stabilizing arc that connects the two components; this may clearly be done in the complement of a left-veering arc that is properly embedded in $P$.  This argument misses the case when $P$ consists of single component; however, since $P$ is cut from $S$ by a two-sided arc, such a subsurface meets $S\setminus P$ in two components. 


\end{proof}


The proof of the ''only if`` direction in Theorem~\ref{thm:ot} is an amalgamation of the constructions of \cite[Section 8.5 ]{LV} and \cite[Proposition 4.1.]{hkm09}.  Although these  require the use of embedded foliated open books, rather than the Morse foliated open books highlighted in this article,  the statement is such a close fit to the topic that we have elected to include it anyway.

\begin{proof}[Proof of Theorem \ref{thm:ot}:]
Section 8.5 of  \cite{LV}  proves the existence of a supporting foliated open book for a given contact manifold via a partial open book adapted to the given foliation near the boundary. The proof uses the foliation to construct this adapted partial open book near the boundary and then extends it  into the interior of the manifold in a standard way using a contact cell decomposition. On the other hand,  the proof of  \cite[Proposition 4.1.]{hkm09} considers a (non-right-veering) partial open book for a neighborhood of an overtwisted disc, connects it with a Legendrian arc to the portion of the partial open book that is constructed near the boundary, and then extends it in a standard way using a contact cell decomposition for the complement. 

We now prove that an overtwisted contact three-manifold with foliated boundary $(M,\xi,\F)$ has a non-left-veering foliated open book.  Take a partial open book for a neighbourhood of an overtwisted disc, connect it with a Legendrian arc to the portion of partial open book near the boundary that is adapted to the foliation, and then extend it in a standard way using a contact cell decomposition for the complement. This partial open book now can be extended to a foliated open book that supports $(M,\xi,\F)$ and the non-right-veering arc is preserved throughout the extension process.
\end{proof}

\begin{corollary} If $(B, \pi, \mathcal{F}_\pit, \nabla \pi)$ is left-veering, then the bordered sutured contact invariant $c(\xi)$ vanishes.
\end{corollary}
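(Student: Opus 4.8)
The plan is to reduce the statement to the overtwisted case and then invoke the vanishing of the bordered sutured contact invariant for overtwisted contact structures. The first step is immediate: if $(B, \pi, \mathcal{F}_\pit, \nabla\pi)$ is left-veering, then by Proposition~\ref{prop:otfob} the contact structure $\xi$ it supports on $(M, \partial M, \Gamma)$ is overtwisted.

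Next I would recall from \cite{AFHLPV} that $c(\xi)$ depends only on the contact isotopy class of $\xi$ on $(M,\partial M)$ --- in particular it is independent of the supporting (foliated or partial) open book and is unchanged by positive stabilization. Granting this, the corollary follows from the assertion that $c(\xi)=0$ whenever $\xi$ is overtwisted; this is the bordered sutured analogue of the Ozsv\'ath--Szab\'o vanishing theorem for the closed contact invariant and of the Honda--Kazez--Mati\'c vanishing theorem for the sutured class $EH(M,\Gamma,\xi)$.

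To establish the vanishing I would work with the partial open book produced inside the proof of Proposition~\ref{prop:otfob}: after a sequence of positive stabilizations the foliated open book yields a partial open book $(S', P', h')$ for $(M,\partial M,\Gamma)$ carrying a left-veering arc, and by stabilization-invariance $c(\xi)$ is computed by the \cite{AFHLPV} recipe applied to this triple. One then ports the Honda--Kazez--Mati\'c homological argument to the bordered sutured setting: a single further positive stabilization along the left-veering arc exhibits the distinguished generator representing $c(\xi)$ as the image of a differential (equivalently, as a boundary in the chain complex computing the relevant bordered sutured module), forcing $[c(\xi)]=0$. Alternatively, if \cite{AFHLPV} already records a vanishing statement for non-right-veering partial open books or for overtwisted $\xi$, the corollary is an immediate consequence of that result together with Proposition~\ref{prop:otfob}.

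The main obstacle is exactly this last point: verifying that overtwistedness forces $c(\xi)=0$ at the level of the bordered sutured complex. The pairing theorem relating $c(\xi)$ to the sutured class $EH(M,\Gamma,\xi)$ propagates vanishing only in the direction $c(\xi)=0 \Rightarrow EH=0$, so one cannot simply quote Honda--Kazez--Mati\'c as a black box; the Ito--Kawamuro--style stabilization that introduces the overtwisted disc must be tracked through the definition of the refined invariant, checking that the distinguished generator bounds in the module of \cite{AFHLPV}.
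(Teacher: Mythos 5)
Your reduction is the right one and matches the paper's first move: left-veering implies overtwisted by Proposition~\ref{prop:otfob}, so the corollary reduces to showing $c(\xi)=0$ for overtwisted $\xi$. But the step you flag as ``the main obstacle'' is exactly the step you have not supplied, and it is where your proposal diverges from (and falls short of) the paper's argument. You assume that the relation between $c(\xi)$ and $EH(M,\Gamma,\xi)$ only propagates vanishing in the direction $c(\xi)=0\Rightarrow EH=0$, and you therefore propose to re-run the Honda--Kazez--Mati\'c chain-level argument inside the bordered sutured complex. That programme is not carried out in your write-up --- you only sketch that ``the distinguished generator bounds'' after a further stabilization, without verifying it --- so as it stands the proof is incomplete.

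The paper closes this gap with a single citation: Theorem 3 of \cite{AFHLPV} provides an \emph{isomorphism} between the bordered sutured Floer homology of the foliated open book and the sutured Floer homology of the corresponding partial open book, under which $c(\xi)$ is sent to $EH(M,\Gamma,\xi)$. An isomorphism propagates vanishing in both directions, so $EH=0$ for overtwisted contact structures (Honda--Kazez--Mati\'c) immediately forces $c(\xi)=0$; no chain-level tracking of the overtwisted disc or of stabilizations is needed. Your parenthetical fallback (``if \cite{AFHLPV} already records a vanishing statement\ldots'') is close in spirit, but the precise input is the identification of the two invariants under an isomorphism, not a standalone vanishing theorem. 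If you replace your proposed homological argument with this citation, the proof becomes the paper's.
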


\begin{proof} Theorem 3 in \cite{AFHLPV} shows that under a certain isomorphism between the bordered sutured Floer homology of a foliated open book and the sutured Floer homology associated to the corresponding partial open book,  the bordered sutured contact invariant $c(\xi)$ maps to the Honda-Kazez-Matic invariant $EH( M, \Gamma, \xi)$.  Since the $EH$ invariant vanishes for overtwisted contact manifolds, the result follows.
  \end{proof}

\begin{example}\label{ex:tot}   Here, we describe a Morse foliated open book for an overtwisted three-ball and then show how the existence of a left-veering arc guides the construction of a transverse overtwisted disc. This is essentially the construction described in \cite{IKtot}, but some adaptation is required because the topological type of the page changes with the $S^1$ parameter. 

Figure~\ref{fig:otnbhd} shows an $S^2$ decomposed into ten squares, each of which represents the square tile in the boundary open book foliation defined by a single hyperbolic point.  The integers indicate the order of the hyperbolic points, so that the first is a saddle resolution which transforms AJ and LE leaves into AE and LJ leaves.  The first five hyperbolic points are all positive, so each of the first five changes to the topology of the page is given by cutting along a single arc where the corresponding critical submanifold intersects the page.  These arcs are shown together on the first page in Figure~\ref{fig:otnbhdpages}, showing that $P$ consists of five discs.  There is a unique non-boundary-parallel arc, shown as a dotted curne, in the disc with corners labeled GLKH.

\begin{figure}[h!]
\begin{center}
\includegraphics[scale=1.0]{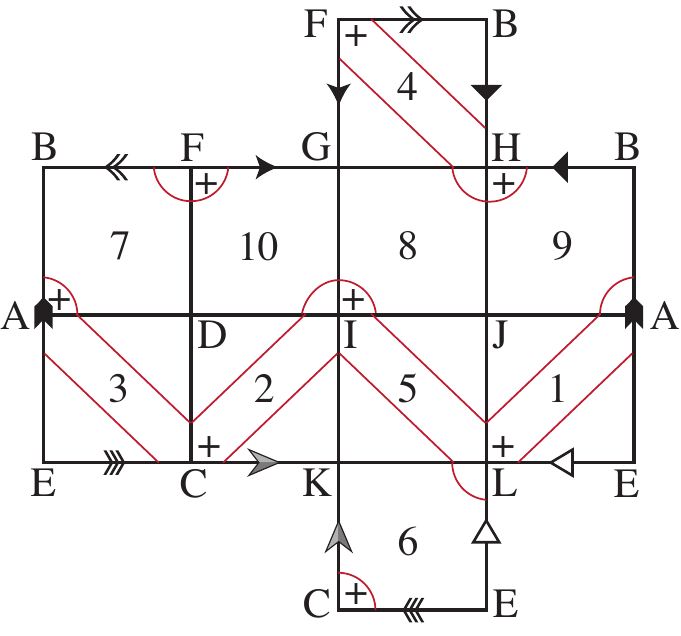}
\caption{Hyperbolic tiles of the open book foliation on the boundary of an overtwisted three-ball. The dividing set $\Gamma$ is shown in red. }\label{fig:otnbhd}
\end{center}
\end{figure}

It is not difficult to show that the manifold defined by these pages is a ball; to see that it's an overtwisted ball, recall from Definition~\ref{def:support} that the open book foliation is topologically conjugate to the characteristic foliation of a supported contact structure.  Thus, the dividing set on $S^2$ is the boundary of a neighborhood of the positive separatrices of the positive hyperbolic points.  Shown in red on Figure~\ref{fig:otnbhd}, the resulting $\Gamma$ has three components.

\begin{figure}[h!]
\begin{center}
\includegraphics[scale=1.0]{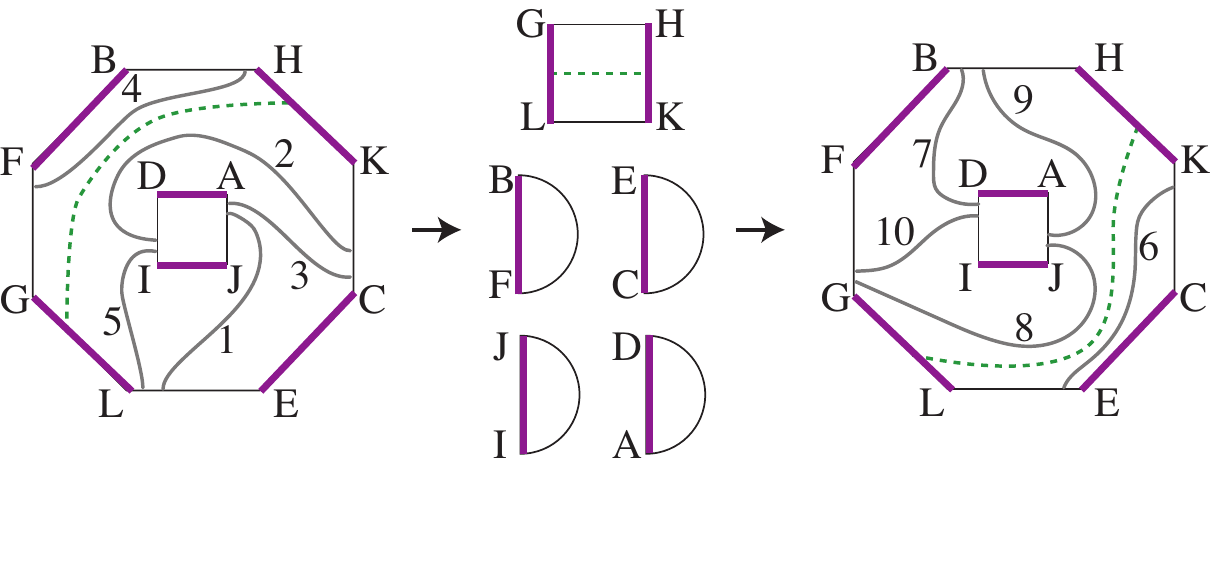}
\caption{Selected pages of the Morse foliated open book decorated with their intersections with the truncated critical submanifolds, each labeled by the corresponding hyperbolic point.  The dotted arc is left-veering under the foliated open book monodromy that identifes the right-hand page with the left-hand page by translation.   }\label{fig:otnbhdpages}
\end{center}
\end{figure}

 Following the approach of \cite{IKtot}, we construct a transverse overtwisted disc by describing how it intersects each page.  As $t$ changes, this intersection changes either by isotopy or by a saddle resolution, as shown in Figure~\ref{fig:otnbhddisc}.
 
\begin{figure}[h!]
\begin{center}
\includegraphics[scale=0.8]{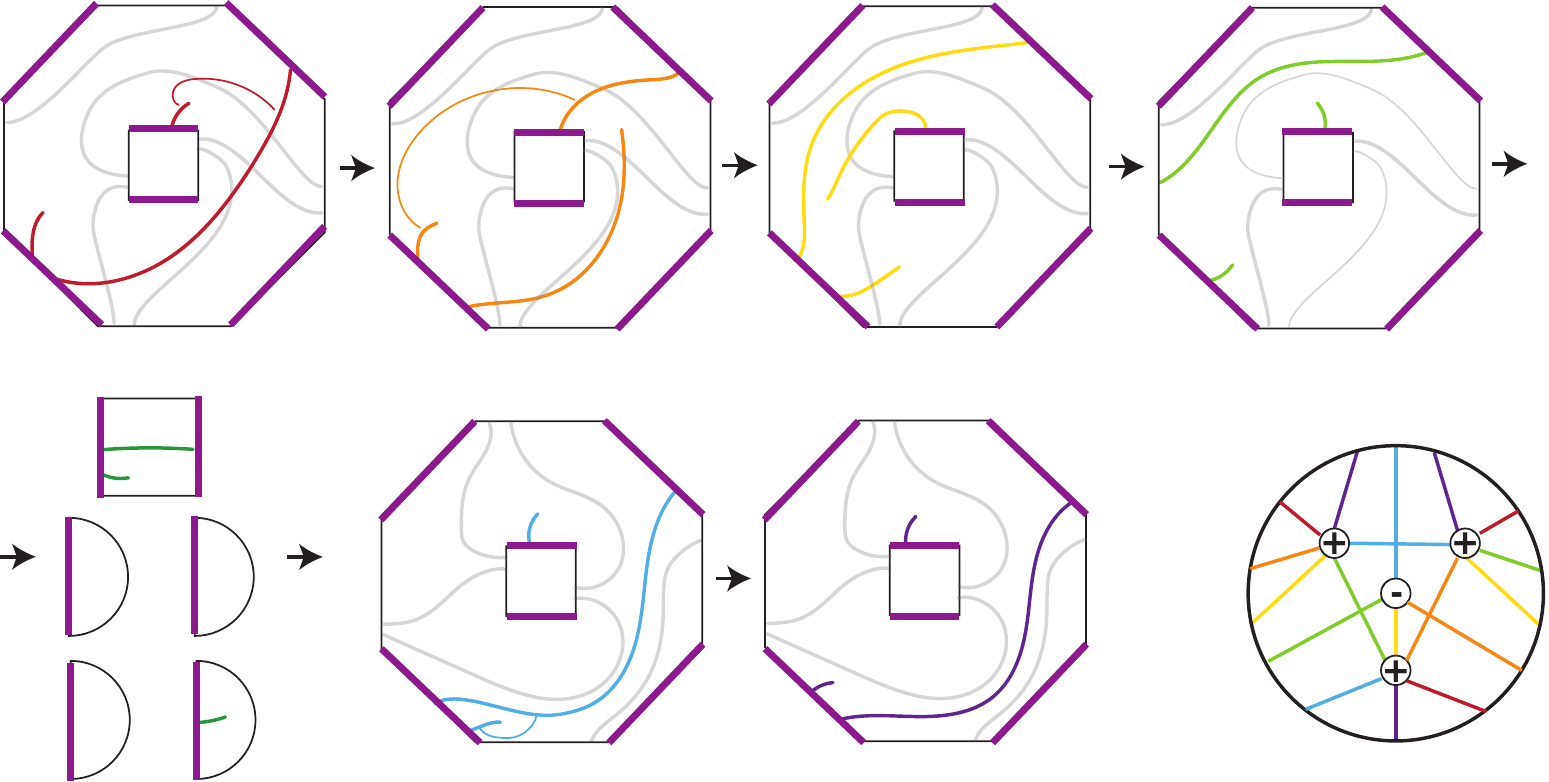}
\caption{Movie presentation of the tranverse overtwisted disc shown on the lower right.  The steps between successive slices are given by isotopy of the bold arcs and saddle resolutions guided by the thin arcs.}\label{fig:otnbhddisc}
\end{center}
\end{figure}

\end{example}

Although the existence of a left-veering arc allows us to construct an overtwisted disc, certainly there are foliated open books for overtwisted contact manifolds that don't have left-veering arcs.  There are several constructions in the literature that show how a left-veering open book may become a right-veering open book via a sequence of positive stabilizations  but the next example illustrates that even an unstabilized foliated open book for an overtwisted contact manifold may fail to be left-veering. \cite{HKMrv1}

\begin{example}\label{ex:minot}
Finally, we turn to a ``minimal" neighborhood of an overtwisted disc.  Beginning with the simplest open book foliation on a transverse overtwisted disc, we first construct something that is almost a foliated book by thickening the disc and taken the thickened leaves of the foliation as pages.  (Here, the function to $S^1$ is induced from the $S^1$ function on the foliation.)  However, each of the two critical pages constructed thus has a pair of critical points, so we locally perturb the Morse function so that $\pi(h_1)<\pi(h_2)<\pi(h_3)<\pi(h_4)$.  This yields the regular pages shown in Figure~\ref{fig:minotnbhd}.  Each page is decorated with its intersections with the ascending critical submanifold of hyperbolic points with lower values and the descending critical submanifold of hyperbolic points with greater values.

\begin{figure}[h!]
\begin{center}
\includegraphics[scale=0.8]{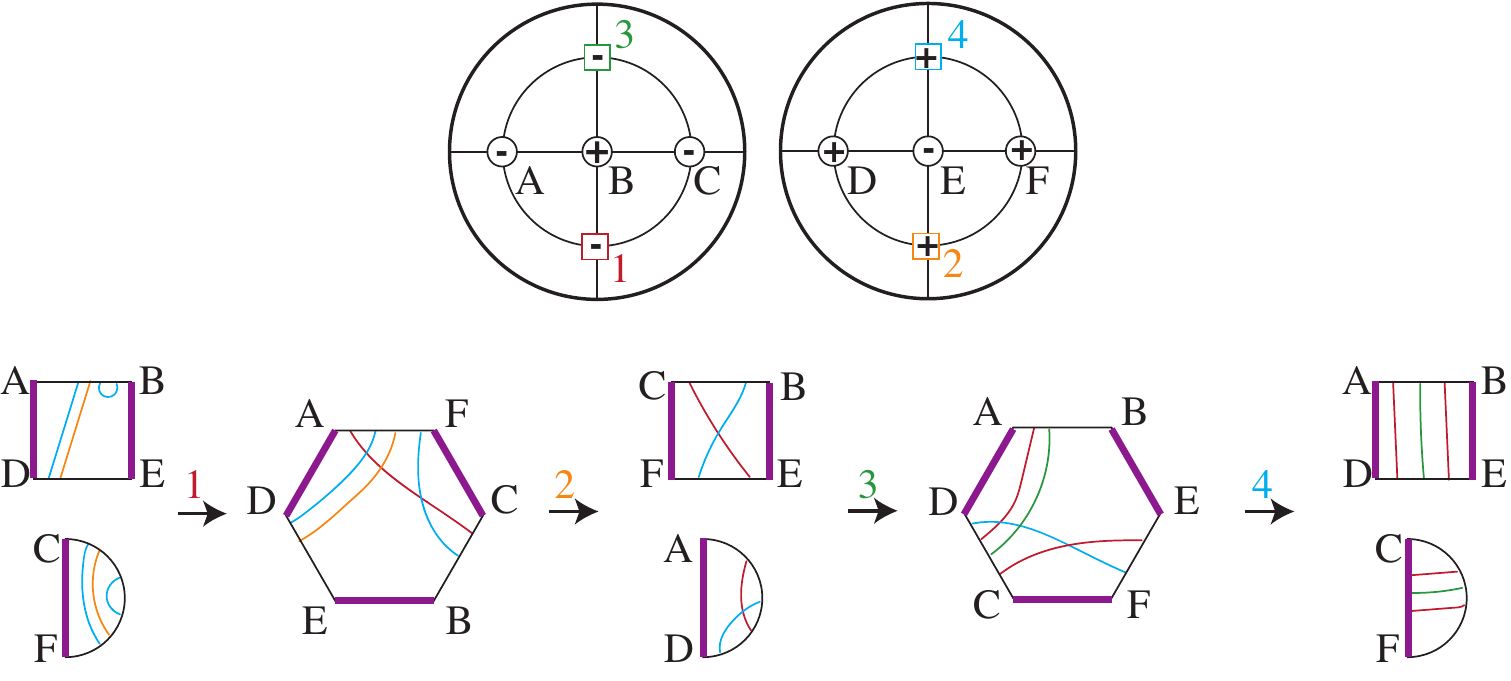}
\caption{ Top: selected leaves of the open book foliation on the front (left) and reverse (right) of a transverse overtwisted disc.  Circles are elliptic points and squares are hyperbolic points.  Bottom:  Regular pages of a Morse foliated open book for a neighborhood of the disc, decorated with the their intersections with the truncated critical submanifolds. }\label{fig:minotnbhd}
\end{center}
\end{figure}

Although this ball is overtwisted, the Morse foliated open book is non-left-veering.  To see this, observe in Figure~\ref{fig:minotnbhd} that $P$ consists of eight discs, each of which has a single component of $B$ on the boundary.  Thus there are no non-boundary-parallel arcs in $P$ and the monodromy is trivial.
\end{example}

\


\bibliographystyle{alpha}

\bibliography{master}

\end{document}